\documentclass[12pt]{article}

% This is a file that contains all of your \usepackage's and \newcommands
% This file includes many commands that I use frequently when creating TeX documents.

%%%%%%%%%%%%%%%%%%%%%%%%%%%%%%%%%%%%%%%%%%%%%%%%%%%%%%%%
% Packages
%%%%%%%%%%%%%%%%%%%%%%%%%%%%%%%%%%%%%%%%%%%%%%%%%%%%%%%%

\usepackage[margin=1in]{geometry}  % set the margins to 1in on all sides
\usepackage{graphicx}              % to include figures
\usepackage{amsmath}               % great math stuff
\usepackage{amsfonts}              % for blackboard bold, etc
\usepackage{amsthm}                % better theorem environments
\usepackage{enumerate}
\usepackage{mathtools}
\usepackage{multicol}
\usepackage{booktabs}
\usepackage{caption}
\usepackage{color}

%%%%%%%%%%%%%%%%%%%%%%%%%%%%%%%%%%%%%%%%%%%%%%%%%%%%%%%%
% Useful commands
%%%%%%%%%%%%%%%%%%%%%%%%%%%%%%%%%%%%%%%%%%%%%%%%%%%%%%%%

\renewcommand{\bar}[1]{\overline{#1}}    % Overline looks nicer for conjugation...
 % Contradiction

%%%%%%%%%%%%%%%%%%%%%%%%%%%%%%%%%%%%%%%%%%%%%%%%%%%%%%%%
% Theorems / Lemmas / Remarks / etc.
%%%%%%%%%%%%%%%%%%%%%%%%%%%%%%%%%%%%%%%%%%%%%%%%%%%%%%%%
\theoremstyle{plain}
%[chapter]

\newtheorem{thm}{Theorem}[section] % Kai
\newtheorem{lem}[thm]{Lemma} % Kai
\newtheorem{prop}[thm]{Proposition} % Kai
 % Kai
 % Kai

\theoremstyle{definition}
\newtheorem{definition}[thm]{Definition}

 % Kai
 % Kai
 % Kai
 % Kai
 % Kai

\theoremstyle{remark}

 % Kai

% I like using arrays to hold equations (lines up the '=' nicely)
% They need to be vertically spaced different than a normal matrix, though
% When you make an array for equation, do something like this:
%   \equationarray
%   \begin{array}{lcr}
%     f(x) & = & (x-2)^2 \\
%          & = & x^2 - 4x + 4
%   \end{array}
%   \normalarray

\newcommand{\normalarray}{\renewcommand{\arraystretch}{1.1}}

\normalarray

%%%%%%%%%%%%%%%%%%%%%%%%%%%%%%%%%%%%%%%%%%%%%%%%%%%%%%%%
%  Kai's Commands
%%%%%%%%%%%%%%%%%%%%%%%%%%%%%%%%%%%%%%%%%%%%%%%%%%%%%%%%

% Kai's operators:

% Kai's sets:
\newcommand{\RR}{\mathbb{R}}      % for Real numbers
\newcommand{\CC}{\mathbb{C}}      % for Complex numbers
      % for Integers
      % for a Field
      % for Natural numbers
      % for Rational numbers
      % for Unimodular numbers

% Kai's fancy (script) letters:
 % So now I can write, for example, \Jq or \J\q
\newcommand{\Jq}{\J^{(q)}}
\newcommand{\Aq}{\A^{(q)}}

\newcommand{\Cq}{\C^{(q)}}
\newcommand{\Dq}{\D^{(q)}}

\newcommand{\A}{\mathcal{A}}

\newcommand{\C}{\mathcal{C}}
\newcommand{\D}{\mathcal{D}}

\newcommand{\J}{\mathcal{J}}

\newcommand{\X}{\mathcal{X}}
\newcommand{\Y}{\mathcal{Y}}

% Kai's Misc.:
\newcommand{\GFq}{\text{GF}(q)}

\newcommand{\BHnk}{\text{BH}(n, k)}
\newcommand{\BH}[2]{\text{BH}(#1, #2)}

\newcommand{\FUHnm}{\text{QUH}(n, m)}
\newcommand{\FUH}[2]{\text{QUH}(#1, #2)}

  % for bolding symbols

% End of Kai's commands. Everything else is Darcy's unless indicated otherwise.

%%%%%%%%%%%%%%%%%%%%%%%%%%%%%%%%%%%%%%%%%%%%%%%%%%%%%%%%
%  Matrix entries -- Makes each entry uni-spaced.
%        Use only one column.
%   Example:
%   $$
%    W_7=\left(\begin{array}{c}
%          \Zp\Zp\Zp\Zp\Zz\Zz\Zz \\
%          \Zp\Zm\Zz\Zz\Zp\Zp\Zz \\
%          \Zp\Zz\Zm\Zz\Zm\Zz\Zp \\
%          \Zp\Zz\Zz\Zm\Zz\Zm\Zm \\
%          \Zz\Zp\Zm\Zz\Zz\Zp\Zm \\
%          \Zz\Zp\Zz\Zm\Zp\Zz\Zp \\
%          \Zz\Zz\Zp\Zm\Zm\Zp\Zz
%        \end{array}\right)
%   $$
%%%%%%%%%%%%%%%%%%%%%%%%%%%%%%%%%%%%%%%%%%%%%%%%%%%%%%%%

 % Empty

%%%%%%%%%%%%%%%%%%%%%%%%%%%%%%%%%%%%%%%%%%%%%%%%%%%%%%%%
%  Easy way to make vectors...
%%%%%%%%%%%%%%%%%%%%%%%%%%%%%%%%%%%%%%%%%%%%%%%%%%%%%%%%

%%%%%%%%%%%%%%%%%%%%%%%%%%%%%%%%%%%%%%%%%%%%%%%%%%%%%%%%
%  A different look for \begin{list}
%%%%%%%%%%%%%%%%%%%%%%%%%%%%%%%%%%%%%%%%%%%%%%%%%%%%%%%%
%
 {\begin{list}{}%
    {\setlength{\leftmargin}{#1}}%
  \item[]%
 }
{\end{list}}

\title{On a class of quaternary complex Hadamard matrices} 
\date{
\today
}

\author{
Kai Fender\thanks{Department of Mathematics and Computer Science, University of Lethbridge,
Lethbridge, Alberta, T1K 3M4, Canada.  \texttt{k.fender@uleth.ca}} \and Hadi Kharaghani\thanks{Department of Mathematics and Computer Science, University of Lethbridge,
Lethbridge, Alberta, T1K 3M4, Canada. \texttt{kharaghani@uleth.ca}}
\and  
 Sho Suda\thanks{Department of Mathematics Education,  Aichi University of Education, 1 Hirosawa, Igaya-cho,  Kariya, Aichi, 448-8542, Japan. \texttt{suda@auecc.aichi-edu.ac.jp}}
}

\begin{document}
\maketitle
\begin{abstract}
We introduce a class of regular unit Hadamard matrices whose entries consist of two complex numbers and their conjugates for a total of four complex numbers. We then show that these matrices 
are contained in the Bose-Mesner algebra of an association scheme arising from skew Paley matrices.
\end{abstract}

\section{Introduction}
An $n \times n$ matrix $H$ is a \emph{unit Hadamard matrix} if its entries are all complex numbers of modulus 1 and it satisfies $HH^* = nI_n$. If the entries of $H$ are all complex $k^\text{th}$ roots of unity, it is called a \emph{Butson Hadamard matrix}, referred to as a $\BHnk$, and the particular case of $k=2$ is a \emph{Hadamard matrix}. Following Compton et al.~\cite{Compton2015} we call a Butson or unit Hadamard matrix \textit{unreal} if its entries are strictly in $\CC \setminus \RR$. A Hadamard matrix $H$ of order $n$ is called to be of \emph{skew type}, if  $H=I +W$, where $W$ is a skew symmetric $(0,\pm 1)$-matrix. It follows that $WW^T=(n-1)I_n$. For a thorough examination of unit and Butson Hadamard matrices, we refer the reader to Sz\"oll\H{o}si's PhD thesis~\cite{Szollosi}, and for some fundamental results and applications of Hadamard matrices, we refer the reader to Seberry and Yamada's 1992 survey~\cite{seberrySurvey}.
Given a matrix $A$ of order $n$, let $R_i$ denote the $i$-th row of $A$, $S(R_i)$ the sum of all entries of $R_i$ and $S(A)$, called the \emph{excess of $A$}, the sum of all its entries. 
A result of ~\cite{Best} implies that for a unit Hadamard matrix of order $n$, $|S(A)| \le n \sqrt n$ and equality occurs if and only if $|S(R_i)|=\sqrt n$ for $1\le i \le n$. A unit Hadamard matrix $A$ of order $n$ is called \emph{regular} if $|S(R_i)|=\sqrt n$ for $1\le i \le n$, see \cite{KS} for details.

In this paper we introduce a recursive method to construct pairs of $(\pm 1)$-matrices satisfying two specific equations. Similar recursive methods were presented in 2005 to obtain symmetric designs and orthogonal designs~\cite{recOrthogonalDesigns, recSymmetricDesigns}. 

Assuming the existence of a skew type Hadamard matrix of order $q+1$, we show the pairs of matrices obtained from our recursive method can be used to construct infinite classes of  a special type of unit Hadamard matrices of order $q^m$, for each positive integer $m$,  which we have dubbed \emph{quaternary unit Hadamard} matrices. In particular, as a corollary we conclude that for each prime power $q \equiv 3 \pmod{4}$, there are infinite classes of unreal $\BH{3^m}{6}$'s, and quaternary unit Hadamard matrices of order $q^m$. Moreover, we will demonstrate that all of the constructed Butson Hadamard matrices and quaternary unit Hadamard matrices are regular, and some of those have   multicirculant structure.

Some of the results in this paper are closely related to part of the results in a recent paper by Compton et al.~\cite{Compton2015}, see also \cite{muk}. Among other results, Compton et al. proved the existence of $\BH{3^m}{6}$'s for each integer $m \ge 0$. Herein, we too will construct $\BH{3^m}{6}$'s. However, our matrices are distinguished from those of Compton et al. in that our $\BH{3^m}{6}$'s are regular and multicirculant. In their paper, Compton et al. also showed that a $\BH{n}{6}$ is equivalent to a pair of amicable $(\pm 1)$-matrices satisfying a certain equation, and that this pair of matrices can be used to construct a Hadamard matrix. We have generalized this result in Section~\ref{sect:concepts} by introducing quaternary Hadamard matrices and showing that they are equivalent to a pair of amicable $(\pm 1)$-matrices satisfying an equation analogous to that introduced by Compton et al. Moreover, we will show that the pairs of amicable $(\pm 1)$-matrices equivalent to quaternary unit Hadamard matrices can be used to construct Hadamard matrices. Next, in Section~\ref{sect:result} we will introduce a recursive method to construct such pairs of matrices, and we will use this method to show that for each prime power $q \equiv 3 \pmod{4}$ and integer $m \ge 0$, we can construct infinite classes of  unreal $\BH{3^m}{6}$'s and unreal quaternary unit Hadamard matrices of order $q^m$.  Finally, in Chapter 4, we introduce an association scheme whose Bose-Mesner algebra contains our quaternary unit Hadamard matrices. 

\section{Quaternary Unit Hadamard Matrices}\label{sect:concepts}

\begin{definition}
\label{def:quaternary}
We say that an $n \times n$ unit Hadamard matrix $H$ is \textit{quaternary} if there is a positive integer $m$ such that the entries of $H$ are all in the set $\left\{\pm \frac{1}{\sqrt{m+1}} \pm i \sqrt{\frac{m}{m+1}}, \pm \frac{1}{\sqrt{m+1}} \mp i \sqrt{\frac{m}{m+1}}\right\}$. For short, we refer to such a quaternary unit Hadamard matrix as a $\FUHnm$. 

%CHANGE ALL $m$ TO $a$ throughout this section and the first paragraph of th next!!!!
\end{definition}

%\begin{rem}
%We could have equally well defined quaternary unit Hadamard matrices to have entries strictly in the set $\left\{ \pm \sqrt{\frac{m}{m+1}} \pm i \frac{1}{\sqrt{m+1}}, \pm \sqrt{\frac{m}{m+1}} \mp i \frac{1}{\sqrt{m+1}}\right\}$. Using either definition does not change any of the results pertaining to quaternary unit Hadamard matrices presented herein, and the proofs of the results are nearly identical regardless which definition is chosen.
%\end{rem}

It is readily verified that any $\FUH{n}{1}$ or $\FUH{n}{3}$ is also a Butson Hadamard matrix. 
%It turns out that if $m \ne 1$ and $m \ne 3$, then a $\FUHnm$ cannot be a Butson Hadamard matrix. The next lemma will help us prove this claim.

\begin{lem}
\label{lem:rootsOfUnity}
Let $m$ be a positive integer. Then $\zeta=\frac{1}{\sqrt{m+1}} + i \sqrt{\frac{m}{m+1}}$ is a root of unity if and only if $m=1$ or $m=3$.
\end{lem}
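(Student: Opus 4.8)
The plan is to start from the observation that
$|\zeta|^2 = \frac{1}{m+1} + \frac{m}{m+1} = 1$, so $\zeta = e^{i\theta}$ lies on the unit circle with $\cos\theta = \frac{1}{\sqrt{m+1}}$, and in particular $\zeta+\bar\zeta = \frac{2}{\sqrt{m+1}}$. The ``if'' direction is then just a direct check of the two cases: for $m=1$ we get $\zeta = \frac{1+i}{\sqrt 2} = e^{i\pi/4}$, a primitive $8$th root of unity, and for $m=3$ we get $\zeta = \frac12 + \frac{\sqrt 3}{2}i = e^{i\pi/3}$, a primitive $6$th root of unity.

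The substance is the ``only if'' direction, and the key idea is an algebraic-integer argument. If $\zeta$ is a root of unity, then so is $\bar\zeta = \zeta^{-1}$, and hence $\zeta + \bar\zeta = \frac{2}{\sqrt{m+1}}$ is a sum of two algebraic integers, so it is itself an algebraic integer. I would then exploit this single constraint. Writing $m+1 = k^2\ell$ with $\ell$ squarefree and $k \ge 1$, we have $\frac{2}{\sqrt{m+1}} = \frac{2}{k\sqrt\ell}$. If $\ell = 1$, this number is a rational algebraic integer, hence an ordinary integer, which forces $k \mid 2$; since $m \ge 1$ rules out $k=1$, the only possibility is $k = 2$, i.e. $m = 3$. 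If $\ell \ge 2$, then $\frac{2}{k\sqrt\ell}$ is irrational with minimal polynomial $x^2 - \frac{4}{k^2\ell}$ over $\QQ$, so being an algebraic integer forces $k^2\ell \mid 4$; with $\ell$ squarefree and $\ell \ge 2$ this leaves only $\ell = 2,\ k = 1$, i.e. $m = 1$. Combining the two cases gives $m \in \{1,3\}$.

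The only delicate points — and they are routine to nail down — are the three facts underpinning the argument: a sum of roots of unity is an algebraic integer (each root of unity being a zero of $x^N-1$), a rational algebraic integer is a rational integer, and the displayed quadratic really is the minimal polynomial of $\frac{2}{k\sqrt\ell}$ when $\ell>1$ is squarefree (which follows from the irrationality of $\sqrt\ell$). As an alternative one could invoke Niven's theorem to settle the sub-case where $\cos\theta = \frac{1}{\sqrt{m+1}}$ is rational, i.e. $m+1$ is a perfect square, but I would prefer the algebraic-integer route since it handles every value of $m$ uniformly.
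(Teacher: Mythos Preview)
Your argument is correct and shares the paper's key idea: if $\zeta$ is a root of unity, then a suitable real combination of its powers must be an algebraic integer. The execution differs in one useful way. You work with $\zeta+\bar\zeta=\frac{2}{\sqrt{m+1}}$, which need not be rational, so you are forced into the squarefree-part case split on $m+1$. The paper instead takes $\zeta^2+\bar\zeta^2=\frac{-2(m-1)}{m+1}$, which is automatically \emph{rational}; the single fact ``a rational algebraic integer is an ordinary integer'' then gives $(m+1)\mid 4$ in one line, and $m\in\{1,3\}$ drops out immediately. Your route is perfectly sound and arguably more self-contained (you never need to compute $\zeta^2$), but passing to $\zeta^2+\bar\zeta^2$ buys a noticeably shorter proof with no case analysis.
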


\begin{proof}
If $\zeta$ is a root of unity, then so are $\zeta^2$ and $\bar{\zeta}^2$. Thus $\zeta^2+{\bar{\zeta}}^2=\frac{-2(m-1)}{m+1}$ is an algebraic integer, and hence an integer. This implies that $m=1$ or 3. \end{proof}

%Let $\zeta = \frac{1}{\sqrt{m+1}} + i \frac{\sqrt{m}}{\sqrt{m+1}}$ and assume without loss of generality that $\zeta$ is a primitive $k^{th}$ root of unity. 

%Notice that $\zeta$ is a root of the polynomial $x^4+2\frac{m-1}{m+1}x^2+1$. It follows that $\phi(k)=\left[ \mathbb{Q}(\zeta): \mathbb{Q} \right] \leq 4$, where $\phi$ denotes Euler's totient function. Therefore, we can conclude that $k \in \{1,2,3,4,5,6,8,10,12\}$. Examining these cases by hand and keeping in mind that $m>0$, we see the only allowable possibilities are $k \in \{3, 6, 8, 12\}$. Thus $m=1$ or $m=3$. The converse is easily seen to be true.
%\end{proof}

The next proposition follows immediately from the previous lemma and the observation that any $\FUH{n}{1}$ or $\FUH{n}{3}$ is also a Butson Hadamard matrix.

\begin{prop}
A $\FUHnm$ is a Butson Hadamard matrix if and only if $m=1$ or $m=3$.
\end{prop}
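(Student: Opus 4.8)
The plan is to prove the biconditional by handling each direction separately, with the substantive content lying entirely in one direction since the other is essentially a bookkeeping remark already made in the text.

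For the ``if'' direction, suppose $m=1$ or $m=3$. Then by the observation already recorded in the excerpt (``It is readily verified that any $\FUH{n}{1}$ or $\FUH{n}{3}$ is also a Butson Hadamard matrix''), a $\FUHnm$ is automatically a Butson Hadamard matrix, so there is nothing further to prove. Concretely, one checks that when $m=1$ the four entries are $\frac{\pm 1 \pm i}{\sqrt 2}$, which are primitive $8$th roots of unity, and when $m=3$ the four entries are $\frac{\pm 1 \pm i\sqrt 3}{2}$, which are primitive $6$th (and $3$rd) roots of unity; in either case all entries of $H$ are $k$th roots of unity for a suitable $k$ (namely $k=8$ or $k=6$), so $H$ is a Butson Hadamard matrix by definition.

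For the ``only if'' direction, suppose a $\FUHnm$ $H$ is a Butson Hadamard matrix. Then every entry of $H$ is a root of unity. In particular the entry $\zeta = \frac{1}{\sqrt{m+1}} + i\sqrt{\frac{m}{m+1}}$ — which is one of the four allowed values, obtained with both ``$+$'' signs — is a root of unity. But Lemma~\ref{lem:rootsOfUnity} states that $\zeta$ is a root of unity precisely when $m=1$ or $m=3$, so we conclude $m \in \{1,3\}$. The only point requiring a word of care is that a $\FUHnm$ genuinely has $\zeta$ (or one of its $\pm$/conjugate variants) as an entry; this is immediate from Definition~\ref{def:quaternary} once one notes that the matrix is nonempty and, in any case, if $n \ge 1$ every row has modulus-$1$ entries drawn from the prescribed four-element set, each of which is a root of unity if and only if $\zeta$ is (since the set is closed under negation and conjugation, operations that preserve the property of being a root of unity).

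I do not expect any real obstacle here: the proposition is a direct corollary of Lemma~\ref{lem:rootsOfUnity} together with the elementary unwinding of the definition of a Butson Hadamard matrix. The only thing to be slightly careful about is the degenerate edge case (e.g.\ ensuring the statement is interpreted for matrices that actually contain entries), and the mild subtlety that ``Butson Hadamard matrix'' requires a \emph{single} $k$ working for all entries simultaneously — but since the four allowed values are $\zeta$, $\bar\zeta$, $-\zeta$, $-\bar\zeta$, they are all powers of a common root of unity exactly when $\zeta$ is a root of unity, so no extra argument is needed.
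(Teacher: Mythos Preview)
Your proposal is correct and follows essentially the same approach as the paper, which simply states that the proposition ``follows immediately from the previous lemma and the observation that any $\FUH{n}{1}$ or $\FUH{n}{3}$ is also a Butson Hadamard matrix.'' You have merely supplied the routine details the paper omits, including the minor care needed because the definition only requires entries to lie \emph{in} the four-element set rather than exhaust it.
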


We now demonstrate that $\FUHnm$'s are equivalent to pairs of $n \times n$ matrices satisfying certain properties. First, however, recall a definition.

\begin{definition}
Two complex matrices $A$ and $B$ are called \textit{amicable} if $AB^* = BA^*$.
\end{definition}

In the reference~\cite{Compton2015}, Compton et al. establish the following result.

\begin{thm}[Compton et al.,~\cite{Compton2015}]
\label{thm:comptonEquivalence}
An unreal $\BH{n}{6}$ is equivalent to a pair of $n \times n$ amicable $(\pm 1)$-matrices $A$ and $B$ satisfying $AA^T+3BB^T = 4nI_n$.
\end{thm}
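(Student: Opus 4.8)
The plan is to make the correspondence completely explicit and then split the identity $HH^*=nI_n$ into its real and imaginary parts. First I would record that the four non-real sixth roots of unity are precisely the numbers $\tfrac{1}{2}\varepsilon_1+\tfrac{\sqrt{3}}{2}i\varepsilon_2$ with $\varepsilon_1,\varepsilon_2\in\{\pm1\}$, namely $e^{\pm\pi i/3}=\tfrac12\pm\tfrac{\sqrt3}{2}i$ and $e^{\pm 2\pi i/3}=-\tfrac12\pm\tfrac{\sqrt3}{2}i$. Consequently an unreal $\BH{n}{6}$ $H$ can be written uniquely as $H=\tfrac12\bigl(A+\sqrt3\,iB\bigr)$, where $A$ and $B$ are the $(\pm1)$-matrices recording, entrywise, the sign of the real part and the sign of the imaginary part of $H$; conversely, for any pair of $(\pm1)$-matrices $A,B$ the matrix $\tfrac12(A+\sqrt3\,iB)$ has all entries non-real sixth roots of unity. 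This is the bijection underlying the word ``equivalent''; it then remains to match the two sets of defining conditions.

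Next I would compute $HH^*$. Since $A$ and $B$ are real, $H^*=\tfrac12(A^T-\sqrt3\,iB^T)$, and expanding gives
\[
HH^*=\tfrac14\bigl(AA^T+3BB^T\bigr)+\tfrac{\sqrt3}{4}\,i\,\bigl(BA^T-AB^T\bigr).
\]
The right-hand side equals the real matrix $nI_n$ if and only if the imaginary part vanishes, i.e.\ $AB^T=BA^T$, and the real part satisfies $\tfrac14(AA^T+3BB^T)=nI_n$. The first condition is exactly $AB^*=BA^*$, the amicability of $A$ and $B$; the second is exactly $AA^T+3BB^T=4nI_n$. Reading the argument forwards produces the pair from the matrix, and reading it backwards (checking also that the reconstructed $H$ has entries of modulus $1$ that are not real) recovers the matrix from the pair, which establishes the claimed equivalence.

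There is no serious obstacle here: the proof is a routine real/imaginary decomposition. The only points that need a little care are verifying that the four non-real sixth roots of unity are exactly the set $\{\tfrac12\varepsilon_1+\tfrac{\sqrt3}{2}i\varepsilon_2\}$, so that the splitting $H=\tfrac12(A+\sqrt3\,iB)$ is simultaneously well defined and onto; and keeping track of the transpose-versus-conjugate bookkeeping, so that the vanishing of the imaginary part comes out precisely as the amicability relation $AB^*=BA^*$ and not some sign variant. It is also worth being explicit about what ``equivalent'' is meant to encompass: the natural reading is the explicit correspondence $H\leftrightarrow(A,B)$ above, under which permuting rows or columns of $H$, or multiplying a row or column of $H$ by $-1$, corresponds to performing the same operation simultaneously on $A$ and $B$.
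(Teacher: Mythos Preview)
Your proposal is correct and follows essentially the same approach as the paper. The paper does not give a separate proof of this theorem (it is quoted from Compton et al.), but immediately afterwards proves the generalization to $\FUHnm$'s by writing $H=\tfrac{1}{\sqrt{m+1}}A+i\sqrt{\tfrac{m}{m+1}}\,B$ and separating the real and imaginary parts of $HH^*=nI_n$; your argument is precisely the $m=3$ instance of that computation, with slightly more attention paid to the bijection and the converse direction.
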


With little difficulty, this result can be generalized in the following manner. Assume $H$ is a $\FUHnm$.
Then we can write
$$ H = \frac{1}{\sqrt{m+1}} A + i \sqrt{\frac{m}{m+1}} B $$
for some $(\pm 1)$-matrices $A$ and $B$. Therefore,
\begin{align*}
nI_n &= \left(\frac{1}{\sqrt{m+1}} A + i \sqrt{\frac{m}{m+1}} B\right)\left(\frac{1}{\sqrt{m+1}} A + i \sqrt{\frac{m}{m+1}} B\right)^*,
\end{align*}
so
$$ n(m+1)I_n = AA^T + m BB^T + i \sqrt{m}(BA^T-AB^T). $$
Since the right-hand-side must be real, this proves the following generalization of Theorem~\ref{thm:comptonEquivalence}.

\begin{thm}
\label{thm:comptonGen}
A $\FUHnm$ is equivalent to a pair of $n \times n$ amicable $(\pm 1)$-matrices $A$ and $B$ satisfying $AA^T + mBB^T = (m+1)nI_n$.
\end{thm}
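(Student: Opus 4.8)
The plan is to prove the equivalence in both directions, treating the displayed computation in the excerpt as one half of the argument. For the forward direction, suppose $H$ is a $\FUHnm$. Since every entry of $H$ has real part $\pm\frac{1}{\sqrt{m+1}}$ and imaginary part $\pm\sqrt{\frac{m}{m+1}}$, we may write $H = \frac{1}{\sqrt{m+1}}A + i\sqrt{\frac{m}{m+1}}B$ where $A$ records the signs of the real parts and $B$ the signs of the imaginary parts; both are $(\pm1)$-matrices, and this decomposition is forced (the real and imaginary parts of $H$ determine $A$ and $B$ uniquely). Expanding $HH^* = nI_n$ as in the excerpt yields
$$ n(m+1)I_n = AA^T + mBB^T + i\sqrt{m}\,(BA^T - AB^T). $$
Comparing real and imaginary parts gives simultaneously $AA^T + mBB^T = (m+1)nI_n$ and $BA^T = AB^T$, i.e. $A$ and $B$ are amicable (note $A,B$ real, so $A^* = A^T$ and amicability $AB^* = BA^*$ reads $AB^T = BA^T$).

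For the converse, suppose $A$ and $B$ are $n\times n$ amicable $(\pm1)$-matrices with $AA^T + mBB^T = (m+1)nI_n$. Define $H := \frac{1}{\sqrt{m+1}}A + i\sqrt{\frac{m}{m+1}}B$. Every entry of $H$ then lies in the prescribed four-element set, and one checks $|H_{jk}|^2 = \frac{1}{m+1} + \frac{m}{m+1} = 1$, so $H$ is a unit matrix of the quaternary form. Computing $HH^*$ reverses the expansion above: the Hermitian part contributes $\frac{1}{m+1}AA^T + \frac{m}{m+1}BB^T = nI_n$ by hypothesis, and the cross terms contribute $\frac{i\sqrt{m}}{m+1}(BA^T - AB^T) = 0$ by amicability. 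Hence $HH^* = nI_n$ and $H$ is a $\FUHnm$.

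The only genuinely delicate point is pinning down what ``equivalent'' means and making the bookkeeping of the two directions clean — in particular observing that for real matrices the conjugate-transpose is just the transpose, so that the amicability condition and the Gram-type identity are exactly the real and imaginary parts of the single matrix equation $HH^* = nI_n$. Everything else is the routine expansion already carried out in the excerpt, now read in reverse. I would present the forward direction by simply referring to the computation preceding the statement, then spell out the converse in the few lines above.
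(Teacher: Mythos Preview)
Your proof is correct and follows essentially the same approach as the paper: decompose $H$ as $\frac{1}{\sqrt{m+1}}A + i\sqrt{\frac{m}{m+1}}B$, expand $HH^*$, and read off the real and imaginary parts. The paper presents only the forward direction explicitly (in the paragraph preceding the theorem) and leaves the reversibility implicit, whereas you spell out the converse as well; this is a welcome bit of extra care but not a different argument.
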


%Here $j_q$, $Q_q$, and $I_{q+1}^\prime$ are defined similarly to their definitions in Equation~(\ref{eqn:order12Had}). We will continue to use these definitions of $j_q$, $Q_q$, and $I_{q+1}^\prime$ throughout the remainder of the paper.

\section{A Recursive Method}\label{sect:result}
%In Section~\ref{sect:concepts} we saw that pairs of $n \times n$ amicable $(\pm 1)$-matrices $A$ and $B$ satisfying $AA^T + mBB^T=(m+1)nI_n$, where $m$ is some positive integer, imply the existence of quaternary unit Hadamard matrices and Hadamard matrices. 
In this section we will introduce a recursive construction for pairs of matrices satisfying the aforementioned properties. 
We use  $j_n$ and $J_n$  to denote the $1 \times n$ and $n \times n$ all-ones matrices respectively. Subscripts will be dropped where no ambiguity arises.

Let  $q+1$ be the order of a skew type Hadamard matrix $H$. Multiply rows and columns of $H$, if necessary, to get the matrix  \[ \left (\begin{matrix} 1 & j\\-j^T & I+Q\end{matrix}\right).\] The $(0,\pm 1)$-matrix $Q=(q_{ij})_{i,j=1}^{q}$, called the \emph{skew symmetric core} of the skew type Hadamard matrix, is
skew symmetric,   $J_qQ=QJ_q=0$, and $QQ^T=qI_q-J_q$. For any odd prime power $q$ the 
Jacobsthal matrix of order $q$ defined by 
$$
q_{ij} = \chi_q(a_i - a_j)  (a_i,a_j\in GF(q))
$$
where $\chi_q$ denotes the quadratic character in $\GFq$, enjoys the following important properties:%. Jacobsthal matrices have three important properties that are the roots of Paley's construction.
\begin{enumerate}
\item $Q$ is symmetric if $q \equiv 1 \pmod{4}$ and  skew symmetric if $q \equiv 3 \pmod{4}$.
\item $J_qQ=QJ_q=0.$
\item $QQ^T = qI_q-J_q$.
\end{enumerate}
So, Jacobsthal matrices provide many examples of skew symmetric cores.

Let $q$ be the order of a skew symmetric core $Q$. Define the following matrices recursively for each nonnegative integer $m$.

\begin{align}
\label{eqn:JA}
\Jq_m & = \begin{cases} 
J_1 & \text{if } m=0\\
J_q \otimes \mathcal{A}^{(q)}_{m-1} & \text{otherwise}
\end{cases}    , &    
\Aq_m & = \begin{cases} 
J_1 & \text{if } m=0\\
I_q \otimes \mathcal{J}^{(q)}_{m-1} + Q \otimes \mathcal{A}^{(q)}_{m-1} & \text{otherwise.}
\end{cases}   
\end{align}
It should be noted that when no ambiguity arises, for brevity we will drop the superscripts on $\Jq_m$ and $\Aq_m$.

It is not hard to prove by induction that $\Jq_m$ and $\Aq_m$ are amicable for each  nonnegative integer $m$. Indeed, the base case is clear, and using the induction hypothesis together with the fact that $J_qQ=QJ_q=0$, note that

\begin{align*}
	\J_{m+1} \A_{m+1}^T &= (J_q \otimes \A_m)(I_q \otimes \J_m + Q \otimes \A_m)^T \\
	&= J_q \otimes (\A_m\J_m^T)  \\
	&= J_q \otimes (\J_m\A_m^T) \\
	&= (I_q \otimes \J_m + Q \otimes \A_m)(J_q \otimes \A_m)^T \\
	&= \A_{m+1}\J_{m+1}^T.
\end{align*}
It follows that $\Jq_m$ and $\Aq_m$ are amicable for each integer $m \ge 0$.
It is also straightforward to prove by induction that 
$$\Jq_m(\Jq_m)^T + q \Aq_m(\Aq_m)^T = q^m(q+1)I_{q^m}$$
whenever $q+1$ is the order of a skew type Hadamard matrix. Again the base case is clear. Using the induction hypothesis together with the facts that $Q$ is skew symmetric, that $QQ^T = qI_q - J_q$, and that $\Jq_m$ and $\Aq_m$ are amicable, we obtain
\begin{align*}
\J_{m+1}&\J_{m+1}^T + q \A_{m+1}\A_{m+1}^T \\
&= (J_q \otimes \A_m)(J_q \otimes \A_m^T) + q(I_q \otimes \J_m + Q \otimes \A_m)(I_q \otimes \J_m^T + Q^T \otimes \A_m^T) \\
&= qJ_q \otimes \A_m\A_m^T + qI_q \otimes \J_m\J_m^T - q\, Q \otimes \J_m\A_m^T + q\, Q \otimes \A_m\J_m^T + q \,QQ^T \otimes \A_m\A_m^T \\
&=  qJ_q \otimes \A_m\A_m^T + qI_q \otimes \J_m\J_m^T + q(qI_q - J_q) \otimes \A_m\A_m^T \\
&= qI_q \otimes (\J_m\J_m^T + q \A_m\A_m^T) \\
& = q^{m+1}(q+1)I_{q^{m+1}}.
\end{align*}
Therefore, for each skew symmetric core of order $q$ and integer $m \ge 0$, the matrices $\Jq_m$ and $\Aq_m$ are a pair of amicable $(\pm 1)$-matrices satisfying 
$$\Jq_m(\Jq_m)^T + q \Aq_m(\Aq_m)^T = q^m(q+1)I_{q^m}.$$
Thus, using the results of Section~\ref{sect:concepts} we obtain a $\FUH{q^m}{q}$. Explicitly, the quaternary unit Hadamard matrices are
\begin{align}
\label{eqn:rec4UH}
\frac{1}{\sqrt{q+1}} \Jq_m + i \sqrt{\frac{q}{q+1}} \Aq_m.
\end{align}
%while the Hadamard matrices are
%\begin{align}
%\label{eqn:recHad}
%\left(
%  \begin{array}{cc}
%    0 & j_q \\
  %  j_q^T & Q_q \\
%  \end{array}
%\right)
%\otimes \Aq_m + I_{q+1}^\prime \otimes \Jq_m.
%\end{align}
In the next subsection, we will show that these quaternary unit Hadamard matrices  have some interesting properties. However, first we will take this opportunity to make a brief comment on our recursive method. Notice that our proof that $\Jq_m$ and $\Aq_m$ are a pair of amicable $(\pm 1)$-matrices satisfying 
$$\Jq_m(\Jq_m)^T + q \Aq_m(\Aq_m)^T = q^m(q+1)I_{q^m}$$
only relied on the fact that $J_1$ is amicable with itself and that $J_1J_1^T + qJ_1J_1^T = (q+1)I_1$. Therefore, it is straightforward to see that given any pair of $n \times n$ amicable $(\pm 1)$-matrices $X$ and $Y$ satisfying $XX^T + qYY^T=n(q+1)I_n$, where $q$ is the order of a skew symmetric core, the matrices
\begin{align*}
  \X_m & = \begin{cases} 
    X & \text{if } m=0\\
    J_q \otimes \Y_{m-1} & \text{otherwise}
  \end{cases}    , &
  \Y_m & = \begin{cases} 
    Y & \text{if } m=0\\
    I_q \otimes \X_{m-1} + Q \otimes \Y_{m-1} & \text{otherwise}
  \end{cases}   
\end{align*}
are amicable and satisfy
$$
\X_m \X_m^T + q\Y_m \Y_m^T = nq^m(q+1)I_{nq^m}
$$
for each integer $m \ge 0$.

As an application, for the prime power  $q \equiv 1 \pmod{4}$, by using the Jacobsthal matrix $Q$ and complex numbers we can  get a recursive construction similar to that in Equation~(\ref{eqn:JA}):
\begin{align}
\label{eqn:CD}
\Cq_m & = \begin{cases} 
J_1 & \text{if } m=0\\
J_q \otimes \Dq_{m-1} & \text{otherwise}
\end{cases}    , &    
\Dq_m & = \begin{cases} 
J_1 & \text{if } m=0\\
I_q \otimes \Cq_{m-1} + i \, Q \otimes \Dq_{m-1} & \text{otherwise}
\end{cases}.   
\end{align}
Almost identical proofs to those above show that $\Cq_m$ and $\Dq_m$ are always amicable and that
$$
\Cq_m(\Cq_m)^* + q\Dq_m(\Dq_m)^* = q^m(q+1)I_{q^m}.
$$
Therefore,  the following is a  unit Hadamard matrix (each entry of the matrix being one of $\pm 1, \pm i$, it is called a \emph{quaternary Hadamard matrix}) for each $m \ge 0$ and prime power $q \equiv 1 \pmod{4}$.
$$
\left(
  \begin{array}{cc}
    0 & j_q \\
    j_q^T & Q
  \end{array}
\right)
\otimes \Dq_m + i \, I_{q+1} \otimes \Cq_m.
$$

\subsection{An infinite class of quaternary unit Hadamard matrices}

By using the appropriate Jacobsthal matrix $Q$ in the definition of $\Jq_m$ and $\Aq_m$, we can ensure that the resulting $\FUH{q^m}{q}$'s have an interesting structure, which we have dubbed \textit{multicirculant}. Before defining this structure, we remind the reader that a circulant matrix with first row $(a_1, \dots, a_n)$ is denoted $\text{circ}(a_1, \dots, a_n)$, and that a block-circulant matrix is a matrix of the form $\text{circ}(A_1, \dots, A_n)$, where the $A_i$ are its blocks.

\begin{definition}
Let $M$ be a matrix of order $n$. If $n=1$, then we call $M$ a multicirculant matrix. If $n>1$, then we call $M$ multicirculant if and only if it is a block-circulant matrix whose blocks are multicirculant matrices.
\end{definition}

%Less formally, a multicirculant matrix is a block-circulant matrix whose blocks are block-circulant, whose blocks' blocks are in turn block circulant, whose blocks' blocks' blocks are also block-circulant, etc. 

Two facts about multicirculant matrices are straightforward to verify and will be used shortly. First, the Kronecker product of two multicirculant matrices is itself a multicirculant matrix. Second, if $A$ and $B$ are two multicirculant $n \times n$ matrices such that all their blocks are of the same dimensions, then $A+B$ is also a multicirculant matrix. 

For any odd prime power $q$, it is well known that one can construct a multicirculant Jacobsthal matrix $Q$. When $q \equiv 3 \pmod{4}$, use a multicirculant Jacobsthal matrix to construct $\Jq_m$ and $\Aq_m$. Then the two facts listed in the previous paragraph imply that $\Jq_m$ and $\Aq_m$ will be multicirculant. It follows that the $\FUH{q^m}{q}$'s in Equation~(\ref{eqn:rec4UH}) are multicirculant.

The $\FUH{q^m}{q}$'s in Equation~(\ref{eqn:rec4UH}) have another interesting property: they have maximal excess. To prove this, we introduce a lemma.

\begin{lem}
\label{lem:excessJA}
The following holds for all integers $m \ge 0$ and prime powers $q \equiv 3 \pmod{4}$.\end{lem}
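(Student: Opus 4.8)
The plan is to prove the lemma by induction on $m$, reading the two assertions straight off the recursive definitions in Equation~(\ref{eqn:JA}) and using only the identity $QJ_q = J_qQ = 0$ (skew symmetry of $Q$ is not needed here). For a square matrix $M$ write $r(M) = Mj^T$ for its column of row sums. I would first note two elementary facts about Kronecker products: if every row of a matrix $N$ sums to the constant $s$, then every row of $J_q\otimes N$ sums to $qs$, and every row of $I_q\otimes N$ sums to $s$ as well; and that the row of $Q\otimes N$ indexed by block-row $i$ has sum $\bigl(\sum_j q_{ij}\bigr)s$. For the base case $m=0$ we have $\Jq_0 = \Aq_0 = J_1 = [1]$, so each has its unique row sum equal to $1$.

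For the inductive step, suppose every row of $\Aq_{m-1}$ sums to a common value $\alpha_{m-1}$ and every row of $\Jq_{m-1}$ sums to a common value $\beta_{m-1}$. Then $\Jq_m = J_q\otimes\Aq_{m-1}$ shows every row of $\Jq_m$ sums to $q\alpha_{m-1}$, while $\Aq_m = I_q\otimes\Jq_{m-1} + Q\otimes\Aq_{m-1}$ shows that a row of $\Aq_m$ in block-row $i$ sums to $\beta_{m-1} + \bigl(\sum_j q_{ij}\bigr)\alpha_{m-1} = \beta_{m-1}$, since $QJ_q = 0$ forces every row of $Q$ to sum to $0$. Hence all rows of $\Jq_m$ and of $\Aq_m$ have constant sums $\beta_m := q\alpha_{m-1}$ and $\alpha_m := \beta_{m-1}$, with $\alpha_0 = \beta_0 = 1$. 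Solving this coupled recurrence (separating the cases $m$ even and $m$ odd) gives $\alpha_m = q^{\lfloor m/2\rfloor}$ and $\beta_m = q^{\lceil m/2\rceil}$, which is the content of the lemma; if instead it is stated in terms of the excess $S(\cdot)$, multiply by the order $q^m$ to get $S(\Jq_m) = q^{m+\lceil m/2\rceil}$ and $S(\Aq_m) = q^{m+\lfloor m/2\rfloor}$.

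There is no real obstacle: once the block bookkeeping is laid out the induction is two lines, and the only point needing a moment's care is the parity split in the closed forms. The payoff is then immediate: for both parities of $m$ one checks $\frac{1}{q+1}\beta_m^2 + \frac{q}{q+1}\alpha_m^2 = q^m$, so every row sum of the matrix $\frac{1}{\sqrt{q+1}}\Jq_m + i\sqrt{\frac{q}{q+1}}\Aq_m$ in Equation~(\ref{eqn:rec4UH}) has modulus $\sqrt{q^m}$; by the bound of~\cite{Best} quoted in the introduction, this $\FUH{q^m}{q}$ is regular and attains the maximal excess $q^m\sqrt{q^m}$.
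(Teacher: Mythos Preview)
Your argument is correct and in fact proves a stronger statement than the paper does: you show that every row of $\Jq_m$ (respectively $\Aq_m$) has the \emph{same} row sum $\beta_m=q^{\lceil m/2\rceil}$ (respectively $\alpha_m=q^{\lfloor m/2\rfloor}$), whereas the paper only computes the total excess $S(\cdot)$. The paper's route is to expand the recursion two steps to get $S(\J_m)=q^3 S(\J_{m-2})$ and $S(\A_m)=q^3 S(\A_{m-2})$, using $S(Q)=0$, and then induct with step size two; your coupled first-order recurrence $\beta_m=q\alpha_{m-1}$, $\alpha_m=\beta_{m-1}$ is tidier and, once solved, recovers the paper's formulas after multiplying by the order $q^m$.

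The main dividend of your approach is that regularity of the $\FUH{q^m}{q}$ in Equation~(\ref{eqn:rec4UH}) drops out immediately from your row-sum computation and the \emph{definition} of regular (namely $|S(R_i)|=\sqrt{n}$ for every $i$), so you do not actually need to invoke Best's bound at the end as you do; the paper, having only the total excess in hand, must appeal to Best's bound to deduce that the row sums are all equal in modulus. Your observation that only $QJ_q=0$ (not skew symmetry) is used is also correct and worth keeping.
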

\begin{enumerate}[(i)]
\item $S(\mathcal{J}^{(q)}_{2m}) = S(\mathcal{A}^{(q)}_{2m}) = q^{3m}$.
\item $S(\mathcal{J}^{(q)}_{2m+1}) = q^{3m+2}$.
\item $S(\mathcal{A}^{(q)}_{2m+1}) = q^{3m+1}$.
\end{enumerate}
%\end{lem}

\begin{proof}
First notice that
\begin{equation}
\begin{split}
 S(\mathcal{J}_{m}) & =  S(J_q \otimes \mathcal{A}_{m-1}) \\
 & = S( J_q \otimes (I_q \otimes \mathcal{J}_{m-2} + Q \otimes \mathcal{A}_{m-2})) \\
 & = S((J_q \otimes I_q) \otimes \mathcal{J}_{m-2}) + S((J_q \otimes Q) \otimes \mathcal{A}_{m-2}) \\
 & = S(J_q)S(I_q)S(\mathcal{J}_{m-2}) + S(J_q)S(Q)S(\mathcal{A}_{m-2}) \\
 & = q^3\, S(\mathcal{J}_{m-2})
\end{split}
\label{eq:recJ}
\end{equation} 
and
\begin{equation}
\begin{split}
 S(\mathcal{A}_{m}) & =  S(I_q \otimes \mathcal{J}_{m-1} + Q \otimes \mathcal{A}_{m-1}) \\
 & = S( I_q \otimes (J_q \otimes \mathcal{A}_{m-2}) ) + S(Q)S(\mathcal{A}_{m-1}) \\
 & = S(J_q)S(I_q)S(\mathcal{A}_{m-2}) \\
 & = q^3 \, S(\mathcal{A}_{m-2}).
\end{split}
\label{eq:recA}
\end{equation}
We now prove that $S(\mathcal{J}^{(q)}_{2m}) = S(\mathcal{A}^{(q)}_{2m}) = q^{3m}$ by induction on $m$. For the base case, notice $S(\mathcal{J}_0) = S(\mathcal{A}_0) = 1$. Now suppose $k \ge 1$ and that $S(\mathcal{J}^{(q)}_{2k}) = S(\mathcal{A}^{(q)}_{2k}) = q^{3k}$. Equation~(\ref{eq:recJ}) together with the induction hypothesis implies $S(\mathcal{J}_{2(m+1)}) = q^3 \, S(\mathcal{J}_{2m}) = q^{3(k+1)}$. Similarly, Equation~(\ref{eq:recA}) and the induction hypothesis imply $S(\mathcal{A}_{2(m+1)}) = q^{3(m+1)}$. Thus for all positive integers $m$ we have $S(\mathcal{J}^{(q)}_{2m}) = S(\mathcal{A}^{(q)}_{2m}) = q^{3m}$. It follows that (i) holds. We can prove (ii) and (iii) similarly.
\end{proof}

Lemma~\ref{lem:excessJA} makes it easy to calculate the excess of the $\FUH{q^m}{q}$'s in Equation~(\ref{eqn:rec4UH}). To compute the excess of these matrices, we consider separately the cases when $m$ is odd and even. First, use Lemma~\ref{lem:excessJA} to observe that
	\begin{align*}
		S\left( \frac{1}{\sqrt{q+1}} \Jq_{2m} + i \sqrt{\frac{q}{q+1}} \Aq_{2m}\right)
		&= \frac{1}{\sqrt{q+1}} S\left(\J_{2m}\right) + i \sqrt{\frac{q}{q+1}} S\left(\A_{2m}\right) \\
		&= \frac{q^{3m}}{\sqrt{q+1}}\left( 1 + i \sqrt{q} \right).
	\end{align*}
	Therefore,
	$$ \left|S\left( \frac{1}{\sqrt{q+1}} \Jq_{2m} + i \sqrt{\frac{q}{q+1}} \Aq_{2m}\right)\right| = \left|\frac{q^{3m}}{\sqrt{q+1}}\left( 1 + i \sqrt{q} \right)\right| = q^\frac{3(2m)}{2}. $$
	Using a similar computation one can show that
	$$ \left|S\left( \frac{1}{\sqrt{q+1}} \Jq_{2m+1} + i \sqrt{\frac{q}{q+1}} \Aq_{2m+1}\right)\right| = q^\frac{3(2m+1)}{2}. $$
	Therefore, for any $m \ge 0$ we have
	$$ \left|S \left( \frac{1}{\sqrt{q+1}} \Jq_{m} + i \sqrt{\frac{q}{q+1}} \Aq_{m}\right)\right| = q^\frac{3m}{2}.$$
The excess meets Best's upper bound \cite{Best}, so the matrices are regular. 

In summary, we have established the following theorem.

\begin{thm}
Let $q$ be the order of a skew symmetric core. Then for each positive integer $m$, there is a regular $\FUH{q^m}{q}$ with excess $q^\frac{3m}{2}$. Furthermore, if $q$ is an odd prime power $q \equiv 3 \pmod{4}$ the constructed regular quaternary unit Hadamard matrix is multicirculant.
\end{thm}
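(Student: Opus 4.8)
The plan is to assemble the statement from the ingredients developed above the theorem. First I would recall the recursive definition of $\Jq_m$ and $\Aq_m$ in Equation~(\ref{eqn:JA}) and check, by induction on $m$, that they form a pair of amicable $(\pm 1)$-matrices satisfying the identity
$$\Jq_m(\Jq_m)^T + q\,\Aq_m(\Aq_m)^T = q^m(q+1)I_{q^m}.$$
The base case $m=0$ is immediate since $J_1 J_1^T + q J_1 J_1^T = (q+1)I_1$, and the inductive step uses only $J_q Q = Q J_q = 0$, the skew symmetry of $Q$, and $QQ^T = qI_q - J_q$, precisely as in the displayed computations preceding the statement. Applying Theorem~\ref{thm:comptonGen} with $n = q^m$ then yields the $\FUH{q^m}{q}$ exhibited in Equation~(\ref{eqn:rec4UH}); call it $H_m = \frac{1}{\sqrt{q+1}}\Jq_m + i\sqrt{\frac{q}{q+1}}\Aq_m$.

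Next I would compute the excess. Since $S$ is linear, $S(H_m) = \frac{1}{\sqrt{q+1}}S(\Jq_m) + i\sqrt{\frac{q}{q+1}}S(\Aq_m)$, so I would feed in the values from Lemma~\ref{lem:excessJA} and treat the parity of $m$ separately. In the even case this gives $|S(H_{2m})| = \frac{q^{3m}}{\sqrt{q+1}}\,|1 + i\sqrt{q}| = q^{3m}$, and the odd case is analogous and produces $|S(H_{2m+1})| = q^{\frac{3(2m+1)}{2}}$; in either case $|S(H_m)| = q^{3m/2}$. Writing $n = q^m$, this equals $n\sqrt{n}$, the maximum allowed by the bound of~\cite{Best}; since that bound is attained with equality only when every row sum has modulus $\sqrt{n}$, the matrix $H_m$ is regular. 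This settles the first two sentences of the theorem.

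For the multicirculant claim, assume $q \equiv 3 \pmod 4$ and build $\Jq_m, \Aq_m$ from a multicirculant Jacobsthal matrix $Q$, which exists for every odd prime power. I would prove by induction on $m$ that $\Jq_m$ and $\Aq_m$ are both multicirculant: the base case $\Jq_0 = \Aq_0 = J_1$ is a $1\times 1$ matrix, hence multicirculant by definition, and in the step $\Jq_{m+1} = J_q \otimes \Aq_m$ is a Kronecker product of multicirculant matrices while $\Aq_{m+1} = I_q \otimes \Jq_m + Q \otimes \Aq_m$ is a sum of two such Kronecker products whose blocks have common dimensions (as $\Jq_m$ and $\Aq_m$ share the order $q^m$); the two facts about multicirculant matrices recorded before the statement then apply. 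Finally, $H_m$ is obtained from $\Jq_m$ and $\Aq_m$ by scaling and addition of matrices with matching block structure, so it too is multicirculant.

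I expect the only delicate points to be bookkeeping: keeping the even/odd split straight so that the modulus collapses exactly to $q^{3m/2}$, and verifying the dimension-compatibility hypothesis needed to invoke the ``sum of multicirculant matrices is multicirculant'' fact in the inductive step. Beyond Theorem~\ref{thm:comptonGen} and Lemma~\ref{lem:excessJA}, no new idea is needed.
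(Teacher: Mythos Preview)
Your proposal is correct and follows essentially the same route as the paper: the paper likewise assembles the theorem from the amicability/identity proved by induction, invokes Theorem~\ref{thm:comptonGen} to obtain $H_m$, computes $|S(H_m)|$ via Lemma~\ref{lem:excessJA} with the same even/odd split, appeals to Best's bound for regularity, and handles the multicirculant claim by choosing a multicirculant Jacobsthal $Q$ and applying the two closure facts you cite. One minor remark: Lemma~\ref{lem:excessJA} is stated in the paper only for prime powers $q\equiv 3\pmod 4$, yet its proof uses nothing beyond $S(Q)=0$ (which holds for any skew symmetric core since $J_qQ=0$), so invoking it for arbitrary skew symmetric cores, as both you and the theorem statement implicitly do, is harmless.
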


\section{Association schemes}
There are many relationships between Hadamard matrices and association schemes; see \cite[Theorem 1.8.1]{BCN}, \cite{GC,HT}  for Hadamard matrices and \cite{cg10,IM} for unit Hadamard matrices. 
In this section we show that the quaternary unit Hadamard matrices are contained in some commutative  association scheme. 

A \emph{(commutative) association scheme of class $d$}
with vertex set $X$ of size $n$ 
is a set of non-zero $(0,1)$-matrices $A_0, \ldots, A_d$, which are called {\em adjacency matrices}, with
rows and columns indexed by $X$, such that:
\begin{enumerate}[(i)]
\item $A_0=I_n$.
\item $\sum_{i=0}^d A_i = J_n$.
\item For any $i\in\{0,1,\ldots,d\}$, $A_i^T\in\{A_0,A_1,\ldots,A_d\}$.
\item For any $i,j\in\{0,1,\ldots,d\}$, $A_iA_j=\sum_{k=0}^d p_{ij}^k A_k$
for some $p_{ij}^k$'s.
\item For any $i,j\in\{0,1,\ldots,d\}$, $A_iA_j=A_jA_i$. 
\end{enumerate}
%The association scheme is called to be \emph{symmetric} if all $A_i$ are symmetric, \emph{nonsymmetric} otherwise.
%The \emph{intersection matrix} $B_i$ ($i\in\{0,1,\ldots,d\}$) is defined as follows: $B_i=(p_{ij}^k)_{j,k=0}^{d}$. 

The vector space spanned by the $A_i$'s forms a commutative algebra, denoted by $\mathcal{A}$ and is called the \emph{Bose-Mesner algebra} or \emph{adjacency algebra}.~There exists a basis of $\mathcal{A}$ consisting of the primitive idempotents, say $E_0=(1/n)J_n,E_1,\ldots,E_d$. 
Since  $\{A_0,A_1,\ldots,A_d\}$ and $\{E_0,E_1,\ldots,E_d\}$ are two bases of $\mathcal{A}$, there exist a change-of-bases matrix $P=(P_{ij})_{i,j=0}^d$ %, $Q=(Q_{ij})_{i,j=0}^d$
 so that
\begin{align*}
A_j=\sum_{i=0}^d P_{ij}E_i.%,\quad E_j=\frac{1}{n}\sum_{i=0}^d Q_{ij}A_i.
\end{align*}
The matrix $P$ %($Q$ respectively) 
is called to be the {\em %first %(second respectively) 
eigenmatrix}.

Write $Q=A_1-A_2$ for disjoint $(0,1)$-matrices $A_1,A_2$, and let $A_0=I_q$.  
Note that $A_1,A_2$ are the adjacency matrices of the doubly regular tournaments on $q$ vertices; see \cite{RB}.  
Let $\mathfrak{X}^{(q)}$ be the association scheme with adjacency matrices $A_0,A_1,A_2$. 
Then the association scheme has the following eigenmatrix $P$:
\begin{align*}
P&=\left(
\begin{array}{cccccc}
 1 & \frac{q-1}{2} & \frac{q-1}{2} \\
 1 & \frac{-1+\sqrt{-q}}{2} & \frac{-1-\sqrt{-q}}{2} \\
 1 & \frac{-1-\sqrt{-q}}{2} & \frac{-1+\sqrt{-q}}{2}
\end{array}
\right).
\end{align*}

We define $\mathfrak{X}_m^{(q)}$ as the association scheme obtained from the $m$-times tensor products of the adjacency matrices $\mathfrak{X}^{(q)}$.
The adjacency matrices of $\mathfrak{X}_m^{(q)}$ are $A_{i_1}\otimes\cdots\otimes A_{i_m}$, $(i_1,\ldots,i_m)\in\{0,1,2\}^m$. 
Letting $E_0,E_1,E_2$ be the primitive idempotents of $\mathfrak{X}^{(q)}$, the primitive idempotents of $\mathfrak{X}_m^{(q)}$ are  $E_{i_1}\otimes\cdots\otimes E_{i_m}$, $(i_1,\ldots,i_m)\in\{0,1,2\}^m$. 
Note that for suitable ordering of the indices of the adjacency matrices and the primitive idempotents, the eigenmatrix $P_m$ of $\mathfrak{X}_m^{(q)}$ is 
\begin{align*}
P_m=P\otimes\cdots\otimes P \quad \text{($m$ factors)}. 
\end{align*}
Now we have the following proposition.
\begin{prop}
The quaternary unit Hadamard matrix $\frac{1}{\sqrt{q+1}}\mathcal{J}_m^{(q)}+i\sqrt{\frac{q}{q+1}}\mathcal{A}_m^{(q)}$ is in the Bose-Mesner algebra of $\mathfrak{X}_m^{(q)}$. 
\end{prop}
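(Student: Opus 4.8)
The plan is to show that each of the two matrices $\mathcal{J}_m^{(q)}$ and $\mathcal{A}_m^{(q)}$ individually lies in the Bose-Mesner algebra of $\mathfrak{X}_m^{(q)}$; since a Bose-Mesner algebra is a complex vector space closed under addition and scalar multiplication, the linear combination $\frac{1}{\sqrt{q+1}}\mathcal{J}_m^{(q)}+i\sqrt{\frac{q}{q+1}}\mathcal{A}_m^{(q)}$ then automatically belongs to it. The adjacency algebra of $\mathfrak{X}_m^{(q)}$ is spanned by the Kronecker products $A_{i_1}\otimes\cdots\otimes A_{i_m}$ with $(i_1,\dots,i_m)\in\{0,1,2\}^m$, so it suffices to express $\mathcal{J}_m^{(q)}$ and $\mathcal{A}_m^{(q)}$ as integer (in fact $\{0,\pm 1\}$-coefficient) linear combinations of such products. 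Equivalently, since $A_0=I_q$, $J_q=A_0+A_1+A_2$, and $Q=A_1-A_2$ all lie in $\mathcal{A}(\mathfrak{X}^{(q)})$, the statement to prove is that the defining recursion (\ref{eqn:JA}) never leaves the tensor algebra $\mathcal{A}(\mathfrak{X}_m^{(q)})=\mathcal{A}(\mathfrak{X}^{(q)})^{\otimes m}$.

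The natural tool is induction on $m$. The base case $m=0$ is immediate: $\mathcal{J}_0^{(q)}=\mathcal{A}_0^{(q)}=J_1$ is the $1\times 1$ identity, which spans the (class-$0$) Bose-Mesner algebra of a single point. For the inductive step, assume $\mathcal{J}_{m-1}^{(q)},\mathcal{A}_{m-1}^{(q)}\in\mathcal{A}(\mathfrak{X}_{m-1}^{(q)})$. Then $\mathcal{J}_m^{(q)}=J_q\otimes\mathcal{A}_{m-1}^{(q)}$ is a Kronecker product of $J_q\in\mathcal{A}(\mathfrak{X}^{(q)})$ with an element of $\mathcal{A}(\mathfrak{X}_{m-1}^{(q)})$, hence lies in $\mathcal{A}(\mathfrak{X}^{(q)})\otimes\mathcal{A}(\mathfrak{X}_{m-1}^{(q)})=\mathcal{A}(\mathfrak{X}_m^{(q)})$; similarly $I_q\otimes\mathcal{J}_{m-1}^{(q)}$ and $Q\otimes\mathcal{A}_{m-1}^{(q)}$ each lie in $\mathcal{A}(\mathfrak{X}_m^{(q)})$, and since the adjacency algebra is closed under sums, so does $\mathcal{A}_m^{(q)}=I_q\otimes\mathcal{J}_{m-1}^{(q)}+Q\otimes\mathcal{A}_{m-1}^{(q)}$. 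This closes the induction, and then the displayed linear combination of $\mathcal{J}_m^{(q)}$ and $\mathcal{A}_m^{(q)}$ is in the algebra as well.

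The only genuinely substantive point — and the one I would state carefully rather than wave at — is the identification $\mathcal{A}(\mathfrak{X}_m^{(q)})=\mathcal{A}(\mathfrak{X}^{(q)})^{\otimes m}$, i.e. that the Bose-Mesner algebra of the tensor-product scheme is exactly the tensor product of the Bose-Mesner algebras of the factors. This follows because the adjacency matrices of $\mathfrak{X}_m^{(q)}$ are by construction the products $A_{i_1}\otimes\cdots\otimes A_{i_m}$, and these span precisely $\mathcal{A}(\mathfrak{X}^{(q)})^{\otimes m}$ (a spanning set of a tensor product of vector spaces is obtained by tensoring spanning sets of the factors); one should also note in passing that these products are nonzero $(0,1)$-matrices and that they do form an association scheme (closure under transpose, the Kronecker-mixed-product rule $(A\otimes B)(C\otimes D)=AC\otimes BD$ giving the structure constants, and commutativity inherited factorwise), which the excerpt has already asserted. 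I do not expect any obstacle beyond keeping this bookkeeping clean; no eigenvalue computation or use of the specific eigenmatrix $P$ is needed for membership — the eigenmatrix would only be relevant if one wanted the explicit coefficients of the matrix against the primitive idempotents $E_{i_1}\otimes\cdots\otimes E_{i_m}$, which is not required by the statement.
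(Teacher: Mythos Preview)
Your proof is correct and follows essentially the same approach as the paper's: induction on $m$ to show that $\mathcal{J}_m^{(q)}$ and $\mathcal{A}_m^{(q)}$ each lie in the Bose-Mesner algebra of $\mathfrak{X}_m^{(q)}$, using that $I_q=A_0$, $J_q=A_0+A_1+A_2$, and $Q=A_1-A_2$ all belong to $\mathcal{A}(\mathfrak{X}^{(q)})$. Your one-step induction is in fact slightly cleaner than the paper's, which carries a two-step hypothesis (on $m-1$ and $m-2$) by unfolding $I_q\otimes\mathcal{J}_{m-1}^{(q)}=I_q\otimes J_q\otimes\mathcal{A}_{m-2}^{(q)}$ rather than simply invoking $I_q\in\mathcal{A}(\mathfrak{X}^{(q)})$ together with the $m-1$ hypothesis.
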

\begin{proof}
We prove by induction that $\mathcal{J}_m^{(q)}$ and $\mathcal{A}_m^{(q)}$ are in the Bose-Mesner algebra of $\mathfrak{X}_m^{(q)}$. 
The cases for $m=1,2$ are clear. 

Assume the cases $m-1,m-2$ to be true. 
The Bose-Mesner algebra of $\mathfrak{X}_m^{(q)}$ contains elements $J_q\otimes A_{i_2}\otimes\cdots \otimes A_{i_m}$ where $A_{i_2}\otimes\cdots \otimes A_{i_m}$ is an adjacency matrix of $\mathfrak{X}_{m-1}^{(q)}$. 
Using the induction hypothesis for $m-1$, the Bose-Mesner algebra of $\mathfrak{X}_m^{(q)}$ contains $\mathcal{J}_m^{(q)}=J_q\otimes \mathcal{A}_{m-1}^{(q)}$. 
By 
\begin{align*}
I_q\otimes \mathcal{J}_{m-1}^{(q)}&=I_q\otimes J_q \otimes \mathcal{A}_{m-2}^{(q)}, \\
Q\otimes \mathcal{A}_{m-1}^{(q)}&=(A_1-A_2)\otimes \mathcal{A}_{m-1}^{(q)},
\end{align*}
and the induction hypothesis for $m-1$ and $m-2$,   the Bose-Mesner algebra of $\mathfrak{X}_m^{(q)}$ contains $\mathcal{A}_m^{(q)}=I_q\otimes \mathcal{J}_{m-1}^{(q)}+Q\otimes  \mathcal{A}_{m-1}^{(q)}$.
This completes the proof. 
\end{proof}
\noindent {\bf Acknowledgments.}
Part of this note in contained in Kai Fender's undergraduate Honour's thesis written under supervision of Hadi Kharaghani. Hadi Kharaghani is supported in part by an NSERC Discovery Grant and ULRF.  Sho Suda is supported by JSPS KAKENHI Grant Number 15K21075. Kai Fender was supported by an NSERC-USRA. 

Thanks to the anonymous referees for their invaluable suggestions. The short and elegant proof of Lemma 2.2 is due to a referee which replaces our longer proof.

\bibliographystyle{plain}
\bibliography{4v-as-2017-refs} % References come BEFORE appendices.

\begin{thebibliography}{10}

\bibitem{Best}
M.R. Best.
\newblock The excess of a {H}adamard matrix.
\newblock {\em Indag. Math.}, 39:357--361, 1977.

\bibitem{BCN}
A.~E. Brouwer, A.~M. Cohen, and A.~Neumaier.
\newblock {\em Distance-regular graphs}, volume~18 of {\em Ergebnisse der
  Mathematik und ihrer Grenzgebiete (3) [Results in Mathematics and Related
  Areas (3)]}.
\newblock Springer-Verlag, Berlin, 1989.

\bibitem{cg10}
Ada Chan and Chris Godsil.
\newblock Type-{II} matrices and combinatorial structures.
\newblock {\em Combinatorica}, 30(1):1--24, 2010.

\bibitem{Compton2015}
B.~Compton, R.~Craigen, and W.~De~Launey.
\newblock Unreal {BH}{$(n, 6)$}'s and {H}adamard matrices.
\newblock {\em Des., Codes and Cryptogr.}, 79, 2016.

\bibitem{recOrthogonalDesigns}
R.~Craigen and H.~Kharaghani.
\newblock A recursive method for orthogonal designs.
\newblock {\em Metrika}, 62(2):185--193.

\bibitem{GC}
R.~W. Goldbach and H.~L. Claasen.
\newblock {$3$}-class association schemes and {H}adamard matrices of a certain
  block form.
\newblock {\em European J. Combin.}, 19(8):943--951, 1998.

\bibitem{HT}
Willem~H. Haemers and Vladimir~D. Tonchev.
\newblock Spreads in strongly regular graphs.
\newblock {\em Des. Codes Cryptogr.}, 8(1-2):145--157, 1996.
\newblock Special issue dedicated to Hanfried Lenz.

\bibitem{IM}
Takuya Ikuta and Akihiro Munemasa.
\newblock Complex {H}adamard matrices contained in a {B}ose-{M}esner algebra.
\newblock {\em Spec. Matrices}, 3:91--110, 2015.

\bibitem{recSymmetricDesigns}
Yury~J. Ionin and Hadi Kharaghani.
\newblock A recursive construction for new symmetric designs.
\newblock {\em Designs, Codes and Cryptography}, 35(3):303--310.

\bibitem{KS}
H.~Kharaghani and Jennifer Seberry.
\newblock The excess of complex {H}adamard matrices.
\newblock {\em Graphs Combin.}, 9(1):47--56, 1993.

\bibitem{muk}
A.~C. Mukhopadhyay.
\newblock Some infinite classes of {H}adamard matrices.
\newblock {\em J. Combin. Theory Ser. A}, 25(2):128--141, 1978.

\bibitem{RB}
K.~B. Reid and Ezra Brown.
\newblock Doubly regular tournaments are equivalent to skew {H}adamard
  matrices.
\newblock {\em J. Combinatorial Theory Ser. A}, 12:332--338, 1972.

\bibitem{seberrySurvey}
J.~Seberry and M.~Yamada.
\newblock Hadamard matrices, sequences, and block designs.
\newblock In {\em Contemporary Design Theory: A Collection of Surveys}, pages
  431--560. John Wiley Sons, Inc., 1992.

\bibitem{Szollosi}
F.~Sz\"oll\H{o}si.
\newblock {\em Construction, classification and parametrization of complex
  Hadamard matrices}.
\newblock PhD thesis, Central European University, 2011.

\end{thebibliography}

\end{document}